\begin{document}

\title{An inexact Picard iteration method for absolute value equation\thanks{This work is supported by the Natural
Science Foundation of Gansu Province (Grant No. 145RJZA102, 145RJZA037, 145RJZA099) and the Youth Research Ability Project of
Northwest Normal University (Grant No. NWNU-LKQN-13-15).}
}


\author{Shu-Xin Miao\and Xiang-Tuan Xiong\and Jin Wen
}


\institute{S.-X. Miao\and X.T. Xiong\and J. Wen\at
              College of Mathematics and Statistics,\\Northwest Normal University, \\
              Lanzhou, 730070, P.R. China\\
              Tel.: +86-931-7971124\\
              \email{shuxinmiao@gmail.com}           
        }

\date{Received: date / Accepted: date}

\maketitle

\begin{abstract}
Recently, a class of inexact Picard iteration method for solving the absolute
value equation: $Ax-|x~|=b$ have been proposed in [Optim Lett 8:2191-2202,2014].  To further improve the performance of Picard iteration method, a new inexact Picard iteration
method is proposed to solve the absolute value equation. The sufficient
conditions for the convergence of the proposed method for the absolute
value equation is given. Some numerical experiments are given to demonstrate the effectiveness of the new method.
\keywords{Absolute value equation\and Picard iteration method\and inexact Picard iteration method\and Convergence}
\end{abstract}

\section{Introduction}
\label{intro}
In this study, we consider the iteration method for solving the absolute value equation (AVE) of the form
\begin{equation}\label{1}
Ax-|x~|=b,
\end{equation}
where $A\in\mathbb{C}^{n\times n}$, $x,~b\in\mathbb{C}^n$, and $|x~|$ denotes the component-wise absolute value of the vector $x$, i.e., $|x~|=(|x_1|,\cdots,|x_n|)^T$. The AVE (\ref{1}) is a special case of the generalized absolute value equation of the type
\begin{equation}\label{2}
Ax+B|x|=b,
\end{equation}
where $B\in\mathbb{C}^{n\times n}$. The generalized absolute value equation (\ref{2}) was introduced in \cite{Rohn-04-LMA} and investigated in a more general context \cite{OM-12-OL,OMM-06-LAA,OP-09-COA}. The absolute value equation (\ref{1}) or (\ref{2}) arises in a variety of optimization problems, e.g. linear complementarity problem, linear programming or convex quadratic programming problems; see for example \cite{Hu-10-OL,OM-12-OL,OMM-06-LAA,OP-09-COA,Rohn-04-LMA}.

For AVE (\ref{1}), one can deduce that all singular values of $A$ exceeding 1 implies existence of a unique solution for every right-hand side $b$ \cite{OMM-06-LAA}. When AVE (\ref{1}) has the unique solution, how to find the solution of (\ref{1}) is a main research topic. In recent years, a large variety of methods for solving AVE (\ref{1}) can be found in the literature \cite{Qu-11-COA,OM-09-OL-a,OM-09-OL-b,No-12-OL,Rohn-14-OL,DS-14-OL,Zhang-09-JOTA}. Among these methods, Picard-type methods capture one's attention. Rohn et al. in \cite{Rohn-14-OL} proposed a class of method to solve AVE
(\ref{1}), in practice their method is reduced to the well known Picard iteration method
\begin{equation}\label{3}
x^{(k+1)}=A^{-1}\left(|x^{(k)}|+b\right),~~k=0,1,2,\cdots,
\end{equation}
where $x^{(0)}=A^{-1}b$ is the initial guess. From (\ref{3}), we can see that there is a linear system with the constant coefficient matrix $A$ should be solved in each iteration of the Picard method. To improve the performance of the Picard method, the linear system with matrix $A$ should be solved by inner iteration, this leads to inexact Picard iteration method. As an example, Salkuyeh suggested that using Hermitian and skew-Hermitian splitting iteration (HSS) method \cite{Bai-HSS} to approximation the solution of the linear system with $A$ at each Picard iteration, and proposed the Picard-HSS method for solving AVE(\ref{1}) \cite{DS-14-OL}. In fact, the Picard-HSS method has been proposed originally by Bai and Yang for weakly nonlinear systems in \cite{Bai-P-HSS}. The sufficient conditions to guarantee the convergence of the Picard-HSS method and some numerical experiments are given to show the effectiveness of the method for solving AVE(\ref{1}) in \cite{DS-14-OL}.

Bear in mind that there are two linear subsystem need to be solved at each step of the inner HSS iteration of the Picard-HSS method \cite{Bai-P-HSS,DS-14-OL}, one is the linear subsystem with shift Hermitian coefficient matrix and the other is the ones with shift skew-Hermitian coefficient matrix. The solution of linear subsystem with shift Hermitian coefficient matrix can be easily obtained by CG method, however, the solution of linear subsystem with shift skew-Hermitian coefficient matrix is not easy to obtain, in some cases, its solution is as difficult as that of the original linear system. To avoid solving a linear subsystem with shift coefficient skew-Hermitian in the inner iteration of the inexact Picard method, we use the single-step HSS method \cite{Wu} to approximate the solution of the linear system with coefficient matrix $A$ and present a new inexact Picard method, abbreviated as Picard-SHSS iteration method, in this paper.

The rest of this paper is organized as follows. In Section~\ref{S2}, after review some notes and the single-step HSS iteration method, the Picard-SHSS iteration method for solving AVE (\ref{1}) is described. And then the convergence properties of the Picard-SHSS iteration method is studied. Numerical experiments
are presented in Section~\ref{S3}, to show the feasibility and effectiveness of the Picard-SHSS method.

\section{The Picard-SHSS method}\label{S2}

For convenience, some notations, definitions and results that will be used in the
following parts are given below. For a matrix $A\in\mathbb{C}^{n\times n}$, $A^*$ represents the conjugate transpose of $A$, and $\rho(A)$ denotes the spectral
radius of $A$. $A$ is said to be non-Hermitian positive definite if its Hermitian part $H=\frac{1}{2}(A+A^*)$ is positive definite, i.e., $x^*Hx>0$ for any $x\in\mathbb{C}^n\backslash\{0\}$.

Let $A\in\mathbb{C}^{n\times n}$ be a non-Hermitian positive definite matrix, and $A=H+S$ be its Hermitian and skew-Hermitian splitting (HSS) with $$H=\frac{1}{2}(A+A^*)~~\mbox{and}~~S=\frac{1}{2}(A-A^*).$$ Based on the HSS of $A$, Bai et al. \cite{Bai-HSS} presented the HSS iteration method to solve non-Hermitian positive definite system of linear equations $Ax=q$. There are two linear subsystem need to be solved at each step of the HSS iteration method, one is the linear subsystem with shift Hermitian coefficient matrix $\alpha I+H$ and the other is the ones with shift skew-Hermitian coefficient matrix $\alpha I+S$ for any positive constant $\alpha$ and identity matrix $I$; see \cite{Bai-HSS} for more details. The challenges of the HSS iteration method lies in solving the linear subsystem with $\alpha I+S$, which is as difficult as that of the original linear system in some cases. To avoid solving a linear subsystem with $\alpha I+S$ in the HSS iteration method, the single-step HSS method is proposed recently \cite{Wu}. The iteration scheme of the single-step HSS method used for solving system of linear equations $Ax=q$ can be written equivalently as
\begin{equation}\label{SHSS}
(\alpha I+H)x^{(k+1)}=(\alpha I-S)x^{(k)}+q,
\end{equation}
here $\alpha$ is a positive iteration parameter. It has been proved that, under a loose restriction on
the iteration parameter $\alpha$, the single-step HSS method is convergent to the unique solution of the linear
system $Ax=q$ for any initial guess $x^{(0)}\in \mathbb{C}^n$; see \cite{Wu}.

Recalling that the Picard iterative method for solving AVE (\ref{1}) is a fixed-point iterative method of the form
\begin{equation}\label{7}
Ax^{(k+1)}=|x^{(k)}|+b,~~k=0,1,2,\cdots.
\end{equation}
We assume that the matrix $A$ is non-Hermitian positive definite. In this case, the next
iterate of $x^{(k+1)}$ can be approximately computed by the single-step HSS iteration by making use
of the splitting $A=M(\alpha)-N(\alpha)$ as following (see \cite{Bai-P-HSS})
\begin{equation}\label{8}
\begin{array}{c}
M(\alpha)x^{(k,l+1)}=N(\alpha)x^{(k,l)}+|x^{(k)}|+b,\\
\begin{array}{l}l=0,1,\cdots,l_k-1,\\
k=0,1,2,\cdots,
\end{array}
\end{array}
\end{equation}
where $M(\alpha)=\alpha I+H$ and $N(\alpha)=\alpha I-S$, $\alpha$ is a positive constant, $\{l_k\}_{k=0}^\infty$ a prescribed sequence of positive integers, and $x^{(k,0)}=x^{(k)}$ is the starting point of the inner single-step HSS iteration at $k$-th outer Picard iteration. This leads to the inexact
Picard iteration method, called Picard-SHSS iteration method, for solving the system (\ref{1})
which can be summarized as following

\noindent{\bf The Picard-SHSS iteration method}: {\it Let $A\in\mathbb{C}^{n\times n}$ be no-Hermitian positive definite, $H=\frac{1}{2}(A+A^*)$ and $S=\frac{1}{2}(A-A^*)$ be the Hermitian and skew-Hermitian parts of $A$ respectively. Given an initial guess $x^{(0)}\in\mathbb{C}^n$ and a sequence $\{l_k\}_{k=0}^\infty$ of positive integers, compute $x^{(k+1)}$ for $k=0,1,2,\cdots$ using the following iteration scheme until $\{x^{(k)}\}$ satisfies the stopping criterion:
\begin{itemize}
\item[(a).] Set $x^{(k,0)}=x^{(k)}$;
\item[(b).] For $l=0,1,\cdots,l_k-1$, solve the following linear system to obtain $x^{(k,l+1)}$:
$$(\alpha I+H)x^{(k,l+1)}=(\alpha I-S)x^{(k,l)}+|x^{(k)}|+q,$$
where $\alpha$ is a positive constant and $I$ is a the identity matrix;
\item[(c).] Set $x^{(k+1)}=x^{(k,l_k)}$.
\end{itemize}}

Compared with the Picard-HSS iteration method studied in \cite{DS-14-OL}, a linear subsystem with $\alpha I+S$ is avoided in the inner iteration of the Picard-SHSS iteration method. The involved linear subsystem with $\alpha I+H$ of the Picard-SHSS iteration method can be efficiently solved exactly by a sparse Cholesky factorization, or inexactly by a preconditioned Conjugate Gradient method \cite{Saad}.

The next theorem provides sufficient conditions for the convergence of the Picard-SHSS method to solve the AVE (\ref{1}).

\begin{theorem}\label{T1}
Let $A\in\mathbb{C}^{n\times n}$ be a non-Hermitian positive definite matrix and $H=\frac{1}{2}(A+A^*)$ and $S=\frac{1}{2}(A-A^*)$ be its Hermitian and skew-Hermitian parts, respectively. Let $\alpha$ be a constant number such that $\alpha>\max\left\{0,\frac{\sigma_{\max}^2-\lambda_{\min}^2}{2\lambda_{\min}}\right\}$, where $\lambda_{\min}$ is the smallest eigenvalue of $H$ and $\sigma_{\max}$ is the largest singular-value of $S$. Let also $\eta=\|A^{-1}\|_2<1$. Then the AVE (\ref{1}) has a unique solution $x^*$, and for any initial guess $x^{(0)}\in\mathbb{C}^n$ and any sequence of positive integers $l_k$, $k=0,1,\cdots$, the iteration sequence $\{x^{(k)}\}_{k=0}^\infty$ produced by the Picard-SHSS iteration method converges to $x^*$ provided that $l=\lim\inf_{k\rightarrow\infty}l_k\geq N$, where $N$ is a natural number satisfying $$\|T(\alpha)^s\|_2<\frac{1-\eta}{1+\eta}~~\forall s\geq N.$$
\end{theorem}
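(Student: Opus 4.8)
The plan is to proceed in three stages: first establish existence and uniqueness of $x^*$, then confirm that the inner single-step HSS iteration matrix $T(\alpha)=(\alpha I+H)^{-1}(\alpha I-S)$ is convergent under the stated bound on $\alpha$ (so that the natural number $N$ is well defined), and finally derive a one-step error recursion for the outer iteration from which the convergence condition can be read off. For existence and uniqueness I would work with the Picard fixed-point map $G(x)=A^{-1}(|x~|+b)$. Since $\||x~|-|y~|\|_2\le\|x-y\|_2$ holds componentwise, $G$ satisfies $\|G(x)-G(y)\|_2\le\|A^{-1}\|_2\|x-y\|_2=\eta\|x-y\|_2$ with $\eta<1$, so $G$ is a contraction and the Banach fixed-point theorem yields a unique solution $x^*=A^{-1}(|x^*|+b)$ of the AVE.

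To see that $N$ is well defined I would verify directly that the bound on $\alpha$ forces $\|T(\alpha)\|_2<1$. Using $S^*=-S$ one computes $(\alpha I-S)^*(\alpha I-S)=\alpha^2 I+S^*S$, so $\|\alpha I-S\|_2=\sqrt{\alpha^2+\sigma_{\max}^2}$, while $\|(\alpha I+H)^{-1}\|_2=1/(\alpha+\lambda_{\min})$ because $H$ is Hermitian positive definite. Hence $\|T(\alpha)\|_2\le\sqrt{\alpha^2+\sigma_{\max}^2}/(\alpha+\lambda_{\min})$, and a short algebraic manipulation shows this is $<1$ exactly when $\alpha>(\sigma_{\max}^2-\lambda_{\min}^2)/(2\lambda_{\min})$. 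Consequently $\|T(\alpha)^s\|_2\le\|T(\alpha)\|_2^s\to 0$, and since $\eta<1$ makes $(1-\eta)/(1+\eta)>0$, such an $N$ indeed exists.

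The core of the argument is the outer error recursion. Writing $T=T(\alpha)$ and $M=M(\alpha)$, from $A=M-N$ one gets $(I-T)^{-1}M^{-1}=A^{-1}$, so the fixed point of the inner iteration with frozen right-hand side $|x^{(k)}|+b$ is precisely the exact Picard iterate $\hat{x}=A^{-1}(|x^{(k)}|+b)$. Since the inner error contracts as $x^{(k,l)}-\hat{x}=T^{l}(x^{(k)}-\hat{x})$, after $l_k$ inner steps $x^{(k+1)}=\hat{x}+T^{l_k}(x^{(k)}-\hat{x})$. Subtracting $x^*=A^{-1}(|x^*|+b)$ and using $\hat{x}-x^*=A^{-1}(|x^{(k)}|-|x^*|)$, I obtain the identity
\[
x^{(k+1)}-x^*=(I-T^{l_k})A^{-1}(|x^{(k)}|-|x^*|)+T^{l_k}(x^{(k)}-x^*).
\]
Taking $2$-norms and again bounding $\||x^{(k)}|-|x^*|\|_2\le\|x^{(k)}-x^*\|_2$ gives $\|x^{(k+1)}-x^*\|_2\le\bigl[\eta+(1+\eta)\|T^{l_k}\|_2\bigr]\|x^{(k)}-x^*\|_2$.

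Finally I would turn the $\liminf$ hypothesis into a uniform contraction. Because the $l_k$ are integers, $l=\liminf_{k\to\infty}l_k\ge N$ implies $l_k\ge N$ for all $k$ beyond some index $K$; since $\|T^s\|_2\to 0$, the supremum $\beta=\sup_{s\ge N}\|T^s\|_2$ is attained at a finite index and hence $\beta<(1-\eta)/(1+\eta)$ strictly. Then for $k\ge K$ the factor $\theta:=\eta+(1+\eta)\beta<1$ is a fixed contraction constant, and iterating the recursion gives $\|x^{(k)}-x^*\|_2\le\theta^{k-K}\|x^{(K)}-x^*\|_2\to 0$, proving convergence. The step I expect to be the main obstacle is the bookkeeping in the error identity — in particular recognizing that both the exact Picard iterate $\hat{x}$ and the frozen right-hand side feed through the same quantity $A^{-1}(|x^{(k)}|-|x^*|)$, which is exactly what collapses the bound into the single factor $(1+\eta)\|T^{l_k}\|_2$ and makes the threshold $(1-\eta)/(1+\eta)$ in the statement the sharp one.
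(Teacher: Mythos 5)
Your proposal follows the same core route as the paper's proof: both express the outer iterate through the inner iteration matrix $T(\alpha)$, use the identity $\sum_{j=0}^{l_k-1}T(\alpha)^jM(\alpha)^{-1}=\bigl(I-T(\alpha)^{l_k}\bigr)A^{-1}$ to arrive at the error identity $x^{(k+1)}-x^*=T(\alpha)^{l_k}(x^{(k)}-x^*)+\bigl(I-T(\alpha)^{l_k}\bigr)A^{-1}(|x^{(k)}|-|x^*|)$, and bound it via $\||x|-|y|\|_2\le\|x-y\|_2$ to obtain the same recursion $\|x^{(k+1)}-x^*\|_2\le\bigl[\eta+(1+\eta)\|T(\alpha)^{l_k}\|_2\bigr]\|x^{(k)}-x^*\|_2$. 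You differ in three supporting points, each of which makes the argument more self-contained or more rigorous. First, for unique solvability you run the Banach fixed-point theorem on $G(x)=A^{-1}(|x|+b)$ instead of citing Mangasarian--Meyer, an elementary and correct replacement. Second, instead of citing Wu's Theorem 2.1 for $\rho(T(\alpha))<1$, you prove the stronger bound $\|T(\alpha)\|_2\le\sqrt{\alpha^2+\sigma_{\max}^2}/(\alpha+\lambda_{\min})<1$ directly, which yields $\|T(\alpha)^s\|_2\le\|T(\alpha)\|_2^s\to0$ without invoking the spectral radius at all. Third, and most substantively, your final step is tighter than the paper's: the paper concludes from $l_k\ge N$ only that $\|x^{(k+1)}-x^*\|_2<\|x^{(k)}-x^*\|_2$ and immediately declares convergence, but strict monotone decrease of the error by itself does not force it to tend to zero. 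You close this gap by observing that $\beta=\sup_{s\ge N}\|T(\alpha)^s\|_2$ is strictly below $(1-\eta)/(1+\eta)$ (the norms tend to zero, so the supremum reduces to a maximum over finitely many terms, each below the threshold), which gives a uniform contraction factor $\theta=\eta+(1+\eta)\beta<1$ for all sufficiently large $k$ and hence geometric convergence. So your proof is correct, follows the paper's skeleton, and on the concluding step is actually more rigorous than the published argument.
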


\begin{proof}
Let $T(\alpha)=M(\alpha)^{-1}N(\alpha)$, based on the iteration scheme (\ref{8}), we can express the $(k+1)$th iterate $x^{(k+1)}$ of the Picard-SHSS iteration method as
\begin{equation}\label{10}
x^{(k+1)}=T(\alpha)^{l_k}x^{(k)}+\sum_{j=0}^{l_k-1}T(\alpha)^{j}M(\alpha)^{-1}(|x^{(k)}|+b),~k=0,1,2,\cdots.
\end{equation}
Note that $\eta<1$, then AVE (\ref{1}) has a unique solution $x^*\in\mathbb{C}^n$ \cite{OMM-06-LAA} such that
\begin{equation}\label{11}
x^*=T(\alpha)^{l_k}x^*+\sum_{j=0}^{l_k-1}T(\alpha)^{j}M(\alpha)^{-1}(|x^*|+b),~k=0,1,2,\cdots.
\end{equation}
By subtracting (\ref{11}) from (\ref{10}) we have
\begin{equation}\label{12}
x^{(k+1)}-x^*=T(\alpha)^{l_k}(x^{(k)}-x^*)+\sum_{j=0}^{l_k-1}T(\alpha)^{j}M(\alpha)^{-1}(|x^{(k)}|-x^*).
\end{equation}

It follows from \cite[Theorem 2.1]{Wu} that $\rho(T(\alpha))<1$ when $\alpha$ satisfying $\alpha>\max\left\{0,\frac{\sigma_{\max}^2-\lambda_{\min}^2}{2\lambda_{\min}}\right\}$. In this case, some calculations yield $$\sum_{j=0}^{l_k-1}T(\alpha)^{j}M(\alpha)^{-1}=\left(I-T(\alpha)^{l_k}\right)A^{-1}.$$
Now (\ref{12}) becomes
\begin{eqnarray*}
x^{(k+1)}-x^*&=&T(\alpha)^{l_k}(x^{(k)}-x^*)+\left(I-T(\alpha)^{l_k}\right)A^{-1}(|x^{(k)}|-x^*)\\
&=&T(\alpha)^{l_k}\left[(x^{(k)}-x^*)-A^{-1}(|x^{(k)}|-x^*)\right]+A^{-1}(|x^{(k)}|-x^*).
\end{eqnarray*}
Note that $\||x|-|y|\|_2\leq\|x-y\|_2$ for any $x,y\in\mathbb{C}^n$, it then follows that $$\left\|x^{(k+1)}-x^*\right\|_2\leq\left(\left\|T(\alpha)^{l_k}\right\|_2(1+\eta)+\eta\right)\left\|x^{(k)}-x^*\right\|_2.$$

The condition of $\rho(T(\alpha))<1$ when $\alpha$ satisfying $\alpha>\max\left\{0,\frac{\sigma_{\max}^2-\lambda_{\min}^2}{2\lambda_{\min}}\right\}$ ensure that $T(\alpha)$ tend to 0 as $s$ tend  to infinity. Therefore, there is a natural number $N$ such that
$$\left\|T(\alpha)^{s}\right\|_2<\varepsilon :=\frac{1-\eta}{1+\eta}~~~\forall s\geq N.$$

Now, if we let $l=\lim\inf_{k\rightarrow\infty}l_k\geq N$, then $\left\|x^{(k+1)}-x^*\right\|_2<\left\|x^{(k)}-x^*\right\|_2$, hence the iteration sequence $\{x^{(k)}\}_{k=0}^\infty$ produced by the Picard-SHSS iteration method converges to $x^*$. $\hfill\Box$
\end{proof}

In actual computation, the residual-updating form of the Picard-SHSS iteration method is more convenient, which can
be written as following.

\noindent{\bf The Picard-SHSS iteration method} (residual-updating variant): {\it Let $A\in\mathbb{C}^{n\times n}$ be no-Hermitian positive definite, $H=\frac{1}{2}(A+A^*)$ and $S=\frac{1}{2}(A-A^*)$ be the Hermitian and skew-Hermitian parts of $A$ respectively. Given an initial guess $x^{(0)}\in\mathbb{C}^n$ and a sequence $\{l_k\}_{k=0}^\infty$ of positive integers, compute $x^{(k+1)}$ for $k=0,1,2,\cdots$ using the following iteration scheme until $\{x^{(k)}\}$ satisfies the stopping criterion:
\begin{itemize}
\item[(a).] Set $s^{(k,0)}=0$ and $b^{(k)}=|x^{(k)}|+b-Ax^{(k)}$;
\item[(b).] For $l=0,1,\cdots,l_k-1$, solve the following linear system to obtain $s^{(k,l+1)}$:
$$(\alpha I+H)s^{(k,l+1)}=(\alpha I-S)s^{(k,l)}+b^{(k)},$$
where $\alpha$ is a positive constant and $I$ is the identity matrix;
\item[(c).] Set $x^{(k+1)}=x^{(k)}+s^{(k,l_k)}$.
\end{itemize}}

\section{Numerical experiments}\label{S3}

In this section we give some numerical experiments to show the effectiveness of the
Picard-SHSS iteration method to solve AVE (\ref{1}), to do this, the numerical properties of the Picard-HSS and Picard-SHSS methods are examined and compared experimentally by a suit of test problems. We use the residual-updating versions of the Picard-HSS iteration method \cite{DS-14-OL} and Picard-SHSS iteration method.

All the numerical experiments presented in this section have been computed in double precision using some MATLAB R2012b on Intel(R) Core(TM) i5-2400 CPU 3.10 GHz and 4.00 GB of RAM. All runs are started from the initial zero
vector and terminated if the current relative residual satisfies
$${\rm RES}:=\frac{\|Ax^{(k)}-|x^{(k)}|-b\|_2}{\|b\|_2}\leq 10^{-7},$$
where $x^{(k)}$ is the computed solution by each of the methods at iteration $k$, and a maximum number of the iterations
500 is used. In addition, the stopping criterion for the inner iterations of the Picard-HSS and Picard-SHSS methods are set to be
$$\frac{\|b^{(k)-As^{(k,l)}}\|_2}{\|b^{(k)}\|_2}\leq 0.01,$$
and a maximum number of the iterations 10 ($l_k=10,~k=0,1,2,\cdots$) for inner iterations are used. The right-hand side vector of AVE (\ref{1}) is taken such a way that the vector $x=(x_1,~x_2,~\cdots,~x_n)^T$ with $$x_i=(-1)^ii,~~i=1,2,\cdots,n$$
be the exact solution.

The optimal parameters employed in the Picard-HSS and Picard-SHSS iteration methods are chosen to be the experimentally found optimal ones, which result in the least
number of iteration steps of iteration methods.

The coefficient matrix $A$ of AVE (\ref{1}) is given by
\begin{equation}\label{9}
A=T_x\otimes I_m+I_m\otimes T_y+pI_n,
\end{equation}
where $I_m$ and $I_n$ are the identity matrices of order $m$ and $n$ with $n=m^2$, $\otimes$ means the Kronecker product, $T_x$ and $T_y$ are tridiagonal matrices
$$T_x=\mbox{tridiag}(t_2,~t_1,~t_3)_{m\times m}~~\mbox{and}~~T_y=\mbox{tridiag}(t_2,~0,~t_3)_{m\times m}$$
with $t_1=4$,~$t_2=-1-Re$, $t_3=-1+Re$. Here $Re=(qh)/2$ and $h=1/(m+1)$ are the mesh Reynolds number and the equidistant step size, respectively, and $q$ is a positive constant. In fact, the matrix $A$ arising from the finite difference approximation the two-dimensional convection-diffusion equation
$$
\left\{
\begin{array}{ll}
-(u_{xx}+u_{yy})+q(u_x+u_y)+pu=f(x,y),&~(x,y)\in\Omega,\\
u(x,y)=0,&~(x,y)\in\partial\Omega,
\end{array}\right.$$
where $\Omega=(0,~1)\times(0,~1)$, $\partial\Omega$ is its boundary, $q$ is a positive constant used to measure the magnitude of the diffusive term and $p$ is a real number. If we use the five-point finite difference scheme to the diffusive terms and the central difference scheme to the convective terms, then we obtained the matrix $A$. It is easy to find that for every nonnegative number $q$ the matrix $A$ is in general non-symmetric positive definite \cite{DS-14-OL}.

In our numerical experiments, the matrix $A$ in AVE (\ref{1}) is defined by (\ref{9}) with different values of $q$ ($q=0,~1,~10,~\mbox{and}~100$) and different values of $p$ ($p=0~\mbox{and}~-1$). In Table~\ref{t3} and Table~\ref{t4}, we present the numerical results with respect to the Picard-HSS and Picard-SHSS iteration methods, the experimentally optimal parameters used in the Picard-HSS and Picard-SHSS iteration methods are those given in Table~\ref{t1} and Table~\ref{t2}. We give the elapsed CPU time in seconds for the convergence (denoted by CPU), the number of iterations for the convergence (denoted by IT) and the relative residuals (denoted by RES).

From the Table~\ref{t3} and Table~\ref{t4}, we see that both the Picard-HSS and Picard-SHSS iteration methods can successfully produced
approximate solution to the AVE (\ref{1}) for all of the problem-scales $n=m^2$ and the convective measurements $q$. For the convergent cases, the CPU time also increases rapidly with the increasing of the problem-scale for all tested iteration methods. Moreover, numerical results in the two tables show that the Picard-SHSS iteration method perform better than the Picard-HSS iteration method in most cases as the former one cost the least CPU time to achieve stopping criterion except the case of $q=100$ and $n=100,400$. In addition, for $p=-1$, the Picard-SHSS iteration method costs the least number of iteration steps and CPU time to achieve stopping criterion. In summary, the Picard-SHSS iteration method is useful and effective for solving the NP-hard AVE (\ref{1}).

\begin{acknowledgements}
We would like to thank Professor Davod Khojasteh Salkuyeh from University of Guilan for providing us the MATLAB code of the Picard-HSS method.
\end{acknowledgements}


\newcommand{\nosort}[1]{}


\newpage
\begin{table}[ht]
\caption{The optimal parameters for Picard-HSS and Picard-SHSS methods ($p=0$)} \label{t1}
\begin{center}
\begin{tabular}{cccccc}
\hline\noalign{\smallskip}
\multicolumn{2}{c}{Optimal parameters}&$m=10$& $m=20$ &$m=40$&$m=80$\\
\hline\noalign{\smallskip}
$q=0$&Picard-HSS&$11.69$&$12.6$&$13.4$&$13$\\
&Picard-SHSS&$5.745$&$6.5$&$6.4$&$6.6$\\
$q=1$&Picard-HSS&$12.01$&$13.6$&$14$&$13$\\
&Picard-SHSS&$5.926$&$6.6$&$6.75$&$6.6$\\
$q=10$&Picard-HSS&$6.76$&$10.99$&$13.43$&$15.8$\\
&Picard-SHSS&$3.594$&$5.52$&$6.63$&$8.0$\\
$q=100$&Picard-HSS&$23.3$&$23.1$&$8.7$&$9.2$\\
&Picard-SHSS&$81.5$&$26.4$&$4.99$&$4.57$\\
\noalign{\smallskip}\hline
\end{tabular}
\end{center}
\end{table}

\begin{table}[ht]
\begin{center}
\caption{Numerical results for different values of $m$ and $q$ ($p=0$)} \label{t3}
\begin{tabular}{ccccccc}
\hline\noalign{\smallskip}
\multicolumn{3}{c}{Methods}&$m=10$& $m=20$ &$m=40$&$m=80$\\
\hline\noalign{\smallskip}
$q=0$&Picard-HSS&IT&$36$&$32$&$30$&$28$\\
&&CPU&$0.0280$&$0.0371$&$0.1266$&$0.9486$\\
&&RES&$9.8815e-8$&$9.3643e-8$&$9.9900e-8$&$9.9043e-8$\\
&Picard-SHSS&IT&$37$&$32$&$30$&$29$\\
&&CPU&$0.0137$&$0.0245$&$0.1075$&$0.9060$\\
&&RES&$9.4432e-8$&$9.6857e-8$&$9.7569e-8$&$9.9256e-8$\\
\hline\noalign{\smallskip}
$q=1$&Picard-HSS&IT&$35$&$32$&$31$&$28$\\
&&CPU&$0.0220$&$0.0457$&$0.2377$&$1.9978$\\
&&RES&$9.1372e-8$&$9.8614e-8$&$9.3177e-8$&$9.7012e-8$\\
&Picard-SHSS&IT&$36$&$32$&$31$&$29$\\
&&CPU&$0.0136$&$0.0273$&$0.1292$&$1.1940$\\
&&RES&$9.8685e-8$&$9.4248e-8$&$9.4655e-8$&$9.6233e-8$\\
\hline\noalign{\smallskip}
$q=10$&Picard-HSS&IT&$29$&$66$&$33$&$36$\\
&&CPU&$0.0163$&$0.0939$&$0.2636$&$2.5265$\\
&&RES&$9.5635e-8$&$9.9755e-8$&$9.8395e-8$&$9.9024e-8$\\
&Picard-SHSS&IT&$29$&$63$&$33$&$37$\\
&&CPU&$0.0105$&$0.0528$&$0.1372$&$1.4083$\\
&&RES&$9.6650e-8$&$9.8439e-8$&$9.7901e-8$&$9.8170e-8$\\
\hline\noalign{\smallskip}
$q=100$&Picard-HSS&IT&$11$&$17$&$35$&$146$\\
&&CPU&$0.0117$&$0.0342$&$0.2826$&$9.6469$\\
&&RES&$9.8396e-8$&$9.9503e-8$&$9.3177e-8$&$9.7165e-8$\\
&Picard-SHSS&IT&$44$&$25$&$35$&$140$\\
&&CPU&$0.0161$&$0.0246$&$0.1471$&$4.9182$\\
&&RES&$9.8783e-8$&$9.8253e-8$&$9.8972e-8$&$9.7299e-8$\\
\noalign{\smallskip}\hline
\end{tabular}
\end{center}
\end{table}

\newpage
\begin{table}[ht]
\caption{The optimal parameters for Picard-HSS and Picard-SHSS methods ($p=-1$)} \label{t2}
\begin{center}
\begin{tabular}{cccccc}
\hline\noalign{\smallskip}
\multicolumn{2}{c}{Optimal parameters}&$m=10$& $m=20$ &$m=40$&$m=80$\\
\hline\noalign{\smallskip}
$q=0$&Picard-HSS&$13.8$&$11.2$&$10.36$&$10.1$\\
&Picard-SHSS&$6.96$&$5.7$&$5.21$&$5.1$\\
$q=1$&Picard-HSS&$14$&$11.29$&$10.4$&$11$\\
&Picard-SHSS&$7.05$&$5.72$&$5.2$&$5.1$\\
$q=10$&Picard-HSS&$20.55$&$13.6$&$11.1$&$11$\\
&Picard-PHSS&$11.2$&$7.0$&$5.65$&$5.2$\\
$q=100$&Picard-HSS&$27.2$&$22$&$11.1$&$21$\\
&Picard-SHSS&$108$&$33.1$&$11.57$&$10.65$\\
\noalign{\smallskip}\hline
\end{tabular}
\end{center}
\end{table}

\begin{table}[ht]
\begin{center}
\caption{Numerical results for different values of $m$ and $q$ ($p=-1$)} \label{t4}
\begin{tabular}{ccccccc}
\hline\noalign{\smallskip}
\multicolumn{3}{c}{Methods}&$m=10$& $m=20$ &$m=40$&$m=80$\\
\hline\noalign{\smallskip}
$q=0$&Picard-HSS&IT&$24$&$62$&$206$&$769$\\
&&CPU&$0.0186$&$0.0944$&$0.8224$&$25.7551$\\
&&RES&$9.6164e-8$&$9.5608e-8$&$9.8436e-8$&$9.8404e-8$\\
&Picard-SHSS&IT&$21$&$52$&$166$&$614$\\
&&CPU&$0.0099$&$0.0444$&$0.5608$&$18.5361$\\
&&RES&$9.6537e-8$&$9.7425e-8$&$9.9905e-8$&$9.8795e-8$\\
\hline\noalign{\smallskip}
$q=1$&Picard-HSS&IT&$24$&$61$&$203$&$908$\\
&&CPU&$0.0193$&$0.0877$&$1.4320$&$57.8838$\\
&&RES&$9.7842e-8$&$9.9013e-8$&$9.3292e-8$&$9.9324e-8$\\
&Picard-SHSS&IT&$21$&$51$&$166$&$598$\\
&&CPU&$0.0102$&$0.0421$&$0.6038$&$18.9311$\\
&&RES&$9.5087e-8$&$9.5556e-8$&$9.5308e-8$&$9.8338e-8$\\
\hline\noalign{\smallskip}
$q=10$&Picard-HSS&IT&$14$&$27$&$74$&$297$\\
&&CPU&$0.0175$&$0.0519$&$0.5532$&$18.6613$\\
&&RES&$9.8754e-8$&$9.1215e-8$&$9.8943e-8$&$9.6129e-8$\\
&Picard-SHSS&IT&$14$&$23$&$61$&$204$\\
&&CPU&$0.0084$&$0.0283$&$0.2427$&$6.8279$\\
&&RES&$9.0191e-8$&$9.9463e-8$&$9.3622e-8$&$9.7081e-8$\\
\hline\noalign{\smallskip}
$q=100$&Picard-HSS&IT&$14$&$20$&$64$&$65$\\
&&CPU&$0.0151$&$0.0372$&$0.4811$&$4.2747$\\
&&RES&$9.8364e-8$&$9.8128e-8$&$9.4024e-8$&$9.8372e-8$\\
&Picard-SHSS&IT&$83$&$44$&$71$&$63$\\
&&CPU&$0.0508$&$0.0384$&$0.2824$&$2.2035$\\
&&RES&$9.9168e-8$&$9.9270e-8$&$9.8733e-8$&$9.5253e-8$\\
\noalign{\smallskip}\hline
\end{tabular}
\end{center}
\end{table}
\end{document}